\author{Johan Andersson\thanks{Email:johan.andersson@oru.se \, Address:Department of Mathematics, School of Science and Technology, {\"O}rebro University, {\"O}rebro, SE-701 82 Sweden. }}
\title{Mittag-Leffler type theorems  for Helson zeta-functions\thanks{The main ideas of this paper were concieved and a major part of the paper was written while attending the analytic number theory program at  institut Mittag-Leffler in January and March 2024.}}
\theoremstyle{plain}
\newtheorem{thm}{Theorem}
\newtheorem{lem}{Lemma}%
\newtheorem{cor}{Corollary}
\theoremstyle{definition}
\date{}
\DeclareOldFontCommand{\rm}{\normalfont\rmfamily}{\mathrm}
\DeclareOldFontCommand{\sf}{\normalfont\sffamily}{\mathsf}
\DeclareOldFontCommand{\tt}{\normalfont\ttfamily}{\mathtt}
\DeclareOldFontCommand{\bf}{\normalfont\bfseries}{\mathbf}
\DeclareOldFontCommand{\it}{\normalfont\itshape}{\mathit}
\DeclareOldFontCommand{\sl}{\normalfont\slshape}{\@nomath\sl}
\DeclareOldFontCommand{\sc}{\normalfont\scshape}{\@nomath\sc}
\newcommand{\nicefrac}[2]{\leave§ vmode\kern.1em
\raise.5ex\hbox{\the\scriptfont0 #1}\kern-.1em
/\kern-.15em\lower.25ex\hbox{\the\scriptfont0 #2}}
\newcommand{\C}{{\mathbb C}}
\newcommand{\R}{{\mathbb R}} 
\newcommand{\Z}{{\mathbb Z}}
\newcommand{\norm}[1]{\left \Vert {#1} \right \Vert}
\newcommand{\abs}[1]{{\left| {#1} \right|}}
\newcommand{\ZZ}{\mathcal{Z}}
\renewcommand{\Re}{\operatorname{Re}}
\begin{document}
\maketitle
\begin{abstract}
 Let $f$ be a zero-free analytic function on $\Re(s) \geq 1$. We prove that there exists an entire zero-free function $g$ 
 and a Helson zeta-function $\zeta_\chi(s)=\sum_{n=1}^\infty \chi(n) n^{-s}$, where $\chi(n)$ is a completely multiplicative unimodular function such that $f(s)=g(s) \zeta_\chi(s)$ for $\Re(s)>1$. By the Mittag-Leffler theorem  this implies that a Helson zeta-function  may  have meromorphic continuation from $\Re(s)>1$ to the complex plane with a prescribed set of zeros and poles in the half plane $\Re(s)<1$. This improves on results of Seip and Bochkov-Romanov who proved the same result in the strip $21/40<\Re(s)<1$ and conditional on the Riemann hypothesis in the strip $1/2< \Re(s)<1$. 
 Our results also gives information on maximum domains of meromorphicity and analyticity of Helson zeta-functions and show that any open connected set $U$ that includes the half plane $\Re(s) >1$, may be a maximum domain of meromorphicity or of analyticity for a Helson zeta-function. This extends results of Bhowmik and Schlage-Puchta to Dirichlet series with Euler products.
\end{abstract}

A Helson zeta-function $\zeta_\chi(s)$ is given by a Dirichlet series
\begin{gather*} 
  \zeta_\chi(s)=\sum_{n=1}^\infty \frac{\chi(n)} {n^s}=\prod_{p \text{ prime}} \left(1-\frac{\chi(p)}{p^{s}} \right)^{-1}, \qquad (\Re(s)>1)
\end{gather*}
where $\chi$ is a completely multiplicative
 unimodular\footnote{such that $|\chi(n)|=1$.} function. It is clear that $\zeta_\chi(s)$ is analytic for $\Re(s)>1$.  We study the possible analytic and meromorphic continuation to $\Re(s)<1$. 
 Our next theorem which gives all of our main results 
 depends on Lemma \ref{lem1} and 
 Lemma \ref{lem7} which will be stated and proven later.
 \begin{thm} \label{thm1}
  Let $f$ be a continuous function on $\Re(s) \geq 1$ which is analytic for $\Re(s) >1$.
Then there exist an entire function $g$ %
and a Helson zeta-function such that
$$
 f(s)=\frac{\zeta'_\chi(s)}{\zeta_\chi(s)}+g(s), \qquad (\Re(s)>1).
$$
\end{thm}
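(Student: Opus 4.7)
The plan is to work with the logarithmic derivative presentation coming from the Euler product,
\[
\frac{\zeta'_\chi(s)}{\zeta_\chi(s)}=-\sum_p\frac{\chi(p)\log p}{p^s-\chi(p)},\qquad (\Re(s)>1),
\]
as a series of rational functions in $p^s$. Each summand is meromorphic on $\C$ with simple poles exactly at the points $s=i(\arg\chi(p)+2\pi k)/\log p$ on the imaginary axis, each of residue $-1$. Theorem \ref{thm1} is then equivalent to the assertion that, for a suitable completely multiplicative unimodular $\chi$, the analytically continued sum differs from $f$ on $\Re(s)>1$ by the restriction of an entire function $g$. This is a Mittag-Leffler type decomposition, but rigidly constrained: the admissible poles are locked into arithmetic progressions on the imaginary axis---one progression per prime, with step $2\pi i/\log p$---and all residues must equal $-1$.

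I would prove this by an inductive construction in the spirit of the classical Mittag-Leffler theorem. Exhaust $\C$ by compact sets $K_1\subset K_2\subset\cdots$ with $\bigcup K_n=\C$ and fix $\varepsilon_n\downarrow 0$. At stage $n$, using Lemma \ref{lem1}---which I expect to be a Helson/Kronecker type flexibility statement producing, for any prescribed local target, a finite block of primes and unimodular values $\chi(p)$ whose combined contribution to $\zeta'_\chi/\zeta_\chi$ approximates that target on $K_n$ to within $\varepsilon_n$---extend $\chi$ to a fresh block of primes so as to reduce the current error $f-\zeta'_\chi/\zeta_\chi$ on $K_n\cap\{\Re(s)>1\}$. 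Lemma \ref{lem7} is then expected to play the role of a Mittag-Leffler variant, assembling the individual prime contributions into a globally meromorphic object on $\C$ and handling convergence summand-by-summand, so that the limiting $\chi$ gives a well-defined $\zeta'_\chi/\zeta_\chi$ for which $f-\zeta'_\chi/\zeta_\chi$ on $\Re(s)>1$ is the restriction of an entire $g$.

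The main obstacle will be coordinating the two ingredients at each stage. The Mittag-Leffler style construction wants free choice of pole positions, but the primes impose the rigid arithmetic-progression constraint above; moreover, the series of rational terms does not converge absolutely on $\Re(s)\le 1$, so the summation has to be carried out conditionally while simultaneously respecting both uniform convergence on compacta of $\C$ away from the imaginary axis and the prescribed identity on $\Re(s)>1$. I expect Lemma \ref{lem1} to absorb the pole-matching problem by trading freshly available large primes for density of admissible pole locations (so that a single progression can be used to correct a localized principal part elsewhere, up to an entire error), and Lemma \ref{lem7} to supply the conditional-summation and analytic-continuation bookkeeping that welds the successive corrections into the decomposition $f=\zeta'_\chi/\zeta_\chi+g$.
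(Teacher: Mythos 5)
Your proposal misreads both the structure of the paper's argument and the roles of the cited lemmas, and the central mechanism you rely on is not one that can work. You propose to view $\zeta'_\chi/\zeta_\chi$ as a sum of rational functions $-\chi(p)\log p/(p^s-\chi(p))$ and to match $f$ by a Mittag-Leffler style assembly of their imaginary-axis poles. But for the functions constructed in the paper the analytic continuation of $\zeta'_\chi/\zeta_\chi$ past $\Re(s)=1$ is \emph{entire}: the poles of the individual rational summands are artifacts of a termwise resummation that is not valid beyond $\Re(s)=0$, and they are not singularities of the continued Dirichlet series. So the "rigidly constrained pole-matching" picture is the wrong picture, and the statement that "Theorem~\ref{thm1} is then equivalent to the assertion that \ldots the analytically continued sum differs from $f$ \ldots by the restriction of an entire function $g$" presupposes an analytic continuation whose existence is precisely what has to be proved. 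Your sketch offers no actual mechanism for that continuation beyond the phrase "conditional summation," which is the entire difficulty.

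You also misattribute the lemmas. Lemma~\ref{lem1} has nothing to do with primes or $\chi$: it is a pure complex-analysis step (Arakelyan approximation plus a Hardy-class/Paley--Wiener argument) that writes $f(s)=\int_1^\infty q(x)x^{-s}\,dx+g_0(s)$ with $q=o(1)$ and $g_0$ entire. The real engine is Lemma~\ref{lem7}, which chooses $\chi$ so that the iterated antiderivatives $r_j(x)=\int_1^x r_{j-1}$ of $r_1(x)=\int_1^x q + \sum_{n\le x}\chi(n)\Lambda(n)$ satisfy $|r_j(x)|\ll_j x^{j\theta}$ for some fixed $\theta<1$ coming from a Hoheisel-type short-interval prime theorem. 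The continuation then comes from repeated integration by parts in the Mellin integral \eqref{iu}: for each $j$ the bound $|r_j(x)|\ll x^{j\theta}$ makes $s(s+1)\cdots(s+j-1)\int_1^\infty r_j(x)x^{-s-j}\,dx$ converge and be analytic on $\Re(s)>1-j(1-\theta)$, and letting $j\to\infty$ pushes the half-plane to all of $\C$. None of these ideas --- the Laplace-transform representation of $f$, the role of the von Mangoldt weights, the short-interval prime input, or the iterated integration by parts --- appears in your sketch, so the gap is not one of detail but of the main idea.
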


\begin{proof}
By Lemma \ref{lem1} there exists some entire function $g_0(s)$ and function  $f_0(s)$ such that 
\begin{gather} \label{fsdef}
 f(s)= f_0(s)+g_0(s), \\ \intertext{where} f_0(s)=\int_1^\infty
  q(x) x^{-s} dx, \notag
\end{gather}
 and $q(x)=o(1)$. Since $\chi$ is completely multiplicative it follows by taking the logarithmic derivative of its Euler product that
\begin{gather*} 
 \frac {\zeta_\chi'(s)} {\zeta_\chi(s)} 
  = - \sum_{n=1}^\infty \frac{\chi(n) \Lambda(n)} {n^{s}}, \qquad (\Re(s)>1), \\ \intertext{where}
  \Lambda(n)=\begin{cases} \log p & n=p^k \text { is a prime power}, \\ 0 & \text{otherwise}, \end{cases}
\end{gather*}
is the Van Mangoldt function. We proceed roughly as in Bochkov-Romanov \cite{BoRo}. Since we will be working also on $\Re(s) \leq 1/2$ where the sum over the prime powers will not be absolutely convergent we have to work with the Van Mangoldt function instead of just the primes. For $\Re(s)>1$ we get that
\begin{gather}  \label{bp}
 f_0(s)-\frac{\zeta'_\chi(s)}{\zeta_\chi(s)}=\int_1^\infty x^{-s} dr_1(x)=s \int_1^\infty r_1(x) x^{-s-1} dx, \\ \intertext{where}  \label{r1def}
 r_1(x):=\int_1^x q(t) dt + \sum_{n \leq x} \chi(n) \Lambda(n).
\end{gather}
Since $g_0(s)$ is entire it is sufficient to prove that $f_0(s)-\zeta_\chi'(s)/\zeta_\chi(s)$ has an analytic continuation to the entire complex plane. Our new idea is to continue to integrate \eqref{bp} by parts. We get for $\Re(s)>1$ that
\begin{gather} \label{iu}
  f_0(s)-\frac{\zeta_\chi'(s)}{\zeta_\chi(s)}=s(s+1) \cdots (s+j-1) \int_1^\infty r_j(x) x^{-s-j}dx,  \\ \intertext{where} \label{rkdef}
r_j(x):=\int_1^x r_{j-1}(t) dt, \qquad (j \geq 2). 
  \end{gather}
By Lemma \ref{lem7}  we may choose some completely multiplicative unimodular function $\chi$  and some $0<\theta<1$ such that
\begin{gather*} 
  \abs{r_j(x)} \ll_j x^{j\theta},
\end{gather*}
for each $j \geq 1$. For a given $j$  this estimate show that the  representation \eqref{iu} is absolutely convergent for and gives us an  analytic continuation of $g(s)$ to $\Re(s)>1-j(1-\theta)$. Since this holds for any $j \geq 1$,  the function $f_0(s)-\zeta'_\chi(s)/\zeta_\chi(s)$ may be analytically continued to an entire function $g_1(s)$ on the complex plane and our result follows by \eqref{fsdef} with $g(s)=g_0(s)+g_1(s)$. 
\end{proof}

  An immediate consequence of Theorem \ref{thm1} is the following result. 
  \begin{thm} \label{thm2}
  Let $f$ be an analytic zero-free function for $\Re(s)  > 1$, such that $f'(s)/f(s)$ extends to a continuous function on $\Re(s) \geq 1$. Then there exists some entire zero-free  function $g$ and some Helson zeta-function $\zeta_\chi$ such that
  \begin{gather*} 
    f(s)=\zeta_\chi(s) g(s),
    \end{gather*}
   for $\Re(s) > 1$.
\end{thm}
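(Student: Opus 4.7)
The plan is to apply Theorem \ref{thm1} to the logarithmic derivative $f'/f$ rather than to $f$ itself. By hypothesis, $f'/f$ is analytic on $\Re(s) > 1$ (since $f$ is analytic and zero-free there) and extends continuously to $\Re(s) \geq 1$. Therefore Theorem \ref{thm1} produces an entire function $h$ and a Helson zeta-function $\zeta_\chi$ with
$$
\frac{f'(s)}{f(s)} = \frac{\zeta_\chi'(s)}{\zeta_\chi(s)} + h(s), \qquad (\Re(s) > 1).
$$

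Next I would rewrite this as
$$
\frac{d}{ds}\log\!\left(\frac{f(s)}{\zeta_\chi(s)}\right) = h(s)
$$
on the half-plane $\Re(s) > 1$. For this to make sense one needs $f/\zeta_\chi$ to be analytic and nowhere vanishing on $\Re(s) > 1$: the quotient is analytic because $f$ is by assumption and $\zeta_\chi$ is by definition, and it is nonvanishing because $f$ is zero-free by hypothesis while $\zeta_\chi$ is zero-free for $\Re(s) > 1$ by virtue of its absolutely convergent Euler product (each factor $(1 - \chi(p)p^{-s})^{-1}$ is nonzero). Since $\Re(s) > 1$ is simply connected, a single-valued analytic logarithm of $f/\zeta_\chi$ exists there.

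Let $H$ be an entire antiderivative of $h$, which exists because $h$ is entire. Integrating the displayed identity on the simply connected half-plane $\Re(s) > 1$ yields
$$
\log\!\left(\frac{f(s)}{\zeta_\chi(s)}\right) = H(s) + C
$$
for some constant $C \in \C$. Exponentiating and setting $g(s) := e^{H(s)+C}$ then gives an entire, zero-free function $g$ with $f(s) = \zeta_\chi(s)\, g(s)$ for $\Re(s) > 1$, as required.

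There is no real obstacle here: the argument is a clean transfer from the additive statement in Theorem \ref{thm1} to a multiplicative one via the logarithm. The only points worth checking carefully are the zero-freeness of $\zeta_\chi$ on $\Re(s) > 1$ (immediate from the Euler product) and the existence of a global analytic logarithm on the simply connected domain $\Re(s) > 1$ (standard). Everything else is mere bookkeeping.
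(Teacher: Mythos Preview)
Your proof is correct and follows essentially the same route as the paper: apply Theorem~\ref{thm1} to $f'/f$, integrate the resulting identity on the simply connected half-plane, and exponentiate. If anything, you are slightly more careful than the paper in justifying the existence of a single-valued logarithm of $f/\zeta_\chi$ via the zero-freeness of $\zeta_\chi$ from its Euler product.
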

\begin{proof}
  Since $f$ is an analytic zero-free function for $\Re(s) \geq 1$ then $f'(s)/f(s)$
is analytic for $\Re(s) > 1$. Since it extends to a continuous function on $\Re(s)   \geq 1$, then by Theorem \ref{thm1} we have that there exists some entire function $h(s)$ and some Helson zeta-function $\zeta_\chi(s)$ such that
\begin{gather*}
   \frac{f'(s)}{f(s)} =\frac{\zeta_\chi'(s)}{\zeta_\chi(s)}+h(s), \qquad (\Re(s)>1).
\end{gather*}
By taking an antiderivative of this we get that
\begin{gather*}
   \log f(s) =\log(\zeta_\chi(s))+H(s), \qquad (\Re(s)>1),
\end{gather*}
 where  $H$ is an entire function. By exponenting both sides of the equation, we get that
$$
 f(s)=\zeta_\chi(s) g(s), \qquad (\Re(s)>1),
$$
where $g(s)=e^{H(s)}$ is an entire zero-free function.
\end{proof}
 By the Mittag-Leffler Theorem  \cite{MittagLeffler2} and Theorem  \ref{thm1} we obtain %
 \begin{thm} \label{thm3}
  Let  $U$ be an open connected set in $\C$ containing the half plane $\Re(s) >  1$, let $E \subset U \cap \{s: \Re(s)<1\}$ be a set without limit points on $U \cup(1+i \R)$, and let $\{g_a: a \in E\}$ be a set of  entire functions not identically zero but such that $g_a(0)=0$. Then there exists some Helson zeta-function $\zeta_\chi(s)$ which has analytic continuation to $U \setminus E$ such that the singular part of $\zeta_\chi'(s)/\zeta_\chi(s)$ at $a \in E$ is given by $g_a(\frac 1 {s-a})$, 
such that $\zeta_\chi(s)$ has continuous extension to $\Re(s) \geq 1$ and such that the maximal domain of analyticity 
 of $\zeta_\chi(s)$ is $U \setminus E$. 
 \end{thm}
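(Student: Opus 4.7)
The plan is to construct a candidate for $\zeta_\chi'/\zeta_\chi$ on $U$ by a Mittag-Leffler construction, then feed it into Theorem~\ref{thm1} to realize it (up to an entire correction) as the actual logarithmic derivative of a Helson zeta-function, and finally to recover $\zeta_\chi$ itself on $U\setminus E$ by local integration and exponentiation.

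First, I would invoke the generalized Mittag-Leffler theorem on the open set $U$ to obtain a function $m$ analytic on $U\setminus E$ whose singular part at each $a\in E$ is precisely $g_a(1/(s-a))$. This is justified because each $g_a$ is entire with $g_a(0)=0$, so $g_a(1/(s-a))$ is analytic on $\C\setminus\{a\}$ and vanishes at infinity, which is the regularity condition required for the generalized Mittag-Leffler theorem (allowing essential singularities) on $U$. To obtain maximality of the eventual domain, I would additionally build into $m$ a Weierstrass-type analytic factor preventing analytic continuation across any point of $\partial U\setminus(1+i\R)$ where $E$ does not already accumulate.

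Since $E$ has no limit points on $U\cup(1+i\R)$, there is an open set $V\subset U$ containing $\{\Re(s)\geq 1\}$ with $V\cap E=\emptyset$, so $m$ is analytic on $V$ and, in particular, continuous on $\{\Re(s)\geq 1\}$. Applying Theorem~\ref{thm1} to $f=m$ produces an entire function $h$ and a Helson zeta-function $\zeta_\chi$ with
\[
\frac{\zeta_\chi'(s)}{\zeta_\chi(s)}=m(s)-h(s)\qquad(\Re(s)>1).
\]
Because $h$ is entire, the right-hand side extends analytically to $U\setminus E$, with the same prescribed singular parts at $E$ as $m$. Integrating $m-h$ along paths in $U\setminus E$ from a fixed $s_0$ with $\Re(s_0)>1$ and exponentiating then supplies the analytic continuation of $\zeta_\chi$ to $U\setminus E$; continuity on $\{\Re(s)\geq 1\}$ is immediate because the integrand is analytic on $V$.

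Maximality of the domain follows because each $g_a\not\equiv 0$ with $g_a(0)=0$ forces $g_a(1/(s-a))$ to have a genuine non-removable singularity at $a$, so $\zeta_\chi$ cannot be extended across any point of $E$; any hypothetical extension of $\zeta_\chi$ across a boundary point of $U$ would extend $\zeta_\chi'/\zeta_\chi=m-h$ and hence extend $m$, contradicting the construction in the first step. I expect the main obstacle to be precisely this first step: simultaneously arranging the prescribed singular parts on $E$ \emph{and} ensuring that the maximal domain of analyticity of $m$ equals $U\setminus E$. Combining the essential-singularity form of Mittag-Leffler with a Weierstrass-type device tailored to the irregular components of $\partial U\setminus(1+i\R)$ should carry this through, but controlling the construction when $E$ does not already accumulate densely on $\partial U$ is where the delicate bookkeeping lies; the subsequent appeal to Theorem~\ref{thm1} and the exponentiation step are essentially routine.
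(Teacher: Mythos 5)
There is a genuine gap, and it lies exactly where the paper's proof does its cleverest work: the piece of $\partial U$ that lies on the line $\Re(s)=1$. Your sentence "Since $E$ has no limit points on $U\cup(1+i\R)$, there is an open set $V\subset U$ containing $\{\Re(s)\geq 1\}$ with $V\cap E=\emptyset$" is false when $U$ does not contain the whole closed half-plane, for example when $U=\{\Re(s)>1\}$ exactly: no subset of $U$ can contain $1+i\R$. Consequently the Mittag-Leffler function $m$ you build on $U\setminus E$ need not be defined, let alone continuous, on $\{\Re(s)\geq 1\}$, so Theorem~\ref{thm1} cannot be applied to $f=m$ as stated. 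The paper fixes this by first enlarging the domain to $U_2=U\cup U_1$ where $U_1$ is a neighbourhood of $1+i\R$ disjoint from $E$, and running Mittag-Leffler there.

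Even after that repair a second problem appears, which you do not address: once $m$ is analytic on a neighbourhood $U_2$ of $\{\Re(s)\geq 1\}$, your $\zeta_\chi$ will in fact be analytic on $U_2\setminus E$, strictly larger than $U\setminus E$ whenever $K_1:=(1+i\R)\setminus U$ is nonempty, so the maximality claim fails. Your "Weierstrass-type factor" is aimed only at $\partial U\setminus(1+i\R)$ and does nothing for $K_1$; and it cannot simply be replaced by the same kind of device there, because whatever you add must remain \emph{continuous} on $\Re(s)\geq 1$ to keep Theorem~\ref{thm1} applicable. The paper's resolution is the function $H_1$: it is the antiderivative of a carefully weighted series of terms $2\exp(-1/(s-a_n))-\exp(-2/(s-a_n))$ over a dense sequence $a_n$ in $K_1$, which is bounded for $\Re(s)>1$ (so $H_1$ extends continuously to the line) yet $H_1$ fails to be analytic at any point of $K_1$. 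This simultaneous "continuous on the closed half-plane, nowhere analytic on $K_1$" requirement is the delicate point your proposal overlooks. The rest of your outline, the Mittag-Leffler construction of the prescribed singular parts, the handling of $\partial U$ off the critical line via a lacunary-type series (the paper uses $h_2(s)=\sum 2^{-n}(1-\Re(b_n))/(s-b_n)$ over pathwise-reachable boundary points), and the integration-exponentiation step, is essentially the same route as the paper's.
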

 \begin{proof} 
 Since $E$ has no limit points on $1+i\R$ we may let $U_1$ be some neighbourhood of  $1+i\R$ such that $E \cap U_1=\emptyset.$ Let $U_2=U \cup U_1$.  By the Mittag-Leffler Theorem  \cite{MittagLeffler2}  where we in addition to poles also allow certain essential singularities\footnote{This was proved in Mittag-Leffler's paper, although in text-books it is most often stated just for meromorphic functions.}, we  construct a function $g$ which is analytic on $U_2 \setminus E$ with the given singular parts $g_a(1/(s-a))$ for  $a \in E$. We remark that since $g$ is analytic on $\Re(s) \geq 1$ it is in particular continuous on  $\Re(s) \geq 1$.
  Let $K_1=(1+i\R) \setminus U$, 
   let $\{a_1,a_2,\ldots\}$ be a dense set in $K_1$ and let
 $$
  h_1(s)=\sum_{n=1}^\infty 2^{-n } \left( 2 \exp \left( -\frac 1 {s-a_n}\right) -\exp \left( -\frac 2 {s-a_n}\right) \right).
 $$
Since the residues of $h_1(s)$ at $s=a_n$ are $0$ we may find an 
antiderivative $H_1(s)$ of $h_1(s)$ such that  $H_1$ is analytic on $U \cup (1+i\R)^\complement$ 
and thus also on $U$. While  $h_1(s)$ is discontinuous on $K_1$, by the triangle inequality it is bounded $|h_1(s)| \leq 3$ for $\Re(s) > 1$, 
 and its antiderivative $H_1(s)$ extends from  $\Re(s)>1$ to a continuous 
 function on  $ \Re(s) \geq 1$ such that  $H_1(s)$ is
  not analytic on any point in  $K_1$. 
Finally let $K_2=\partial U \setminus  ((1+i\R) \cup E)$. By \cite[Lemma 1]{BhoS} 
we may find a dense set
$\{b_1,b_2,\ldots\}$ in $K_2$ of points that are pathwise reachable in $U$. Let 
$$
h_2(s)=\sum_{n=1}^\infty \frac{1-\Re(b_n)} {2^n(s-b_n)}.%
$$ It is clear that $h_2$ is analytic on $\C \setminus K_2$ but not analytic on any point of $K_2$.
Thus the function
$$f(s)=g(s)+H_1(s)+h_2(s)$$
has the given singular parts on $E$, is analytic on $U$, has a continous extension to $\Re(s) \geq 1$  and is not analytic on any point of
 $\partial U=K_1 \cup K_2$ and thus have $U \setminus E$ as maximum domain of analyticity. Our proof is concluded by applying Theorem \ref{thm1} on the function $f$.
 \end{proof}
 \begin{thm} \label{thm4}
 Let $U \subseteq \C$ be an open connected set containing the half plane $\Re(s) > 1$, let $E \subset U$ be a set without limit points on $U \cup(1+i \R)$, let $\{g_a: a \in E\}$  be a set of entire functions which are zero-free for $s \neq 0$. Then there exists a completely multiplicative unimodular function $\chi$ such that the Helson zeta-function $\zeta_\chi(s)$ is analytic on $U$ and such that the limits
 $$
   \lim_{s \to a} \frac{\zeta_\chi(s)}{g_a(1/(s-a))}= c_a \neq 0,  \qquad (a \in E),
 $$
 exists.  Furthermore we may choose $\chi$ such that the maximal domain
  of analyticity of $\zeta_\chi(s)$ is  $U \setminus E$.
\end{thm}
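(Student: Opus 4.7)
The plan is to reduce Theorem \ref{thm4} to Theorem \ref{thm3} by translating the prescribed multiplicative asymptotic $\zeta_\chi(s)/g_a(1/(s-a)) \to c_a$ into an equivalent condition on the singular part of the logarithmic derivative $\zeta_\chi'/\zeta_\chi$ at $a \in E$. Once the correct singular parts are identified, Theorem \ref{thm3} supplies a Helson zeta-function realizing them, and a short local argument then recovers the multiplicative asymptotic from the asymptotic of the logarithmic derivative.

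For each $a \in E$, factor $g_a(w) = w^{n_a} h_a(w)$, where $n_a \geq 0$ is the order of the (possible) zero of $g_a$ at the origin and $h_a$ is entire and zero-free on $\C$. Writing $h_a = e^{H_a}$ for an entire function $H_a$, define the entire function
$$G_a(w) = -n_a w - w^2 H_a'(w),$$
which satisfies $G_a(0) = 0$. The chain rule then gives
$$\frac{d}{ds}\log g_a\!\left(\frac{1}{s-a}\right) = -\frac{n_a}{s-a} - \frac{1}{(s-a)^2}H_a'\!\left(\frac{1}{s-a}\right) = G_a\!\left(\frac{1}{s-a}\right),$$
so $G_a(1/(s-a))$ is precisely the singular part at $a$ of the logarithmic derivative of $g_a(1/(s-a))$. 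Applying Theorem \ref{thm3} with the family $\{G_a\}_{a \in E}$ produces a Helson zeta-function $\zeta_\chi$ with analytic continuation to $U \setminus E$, continuous extension to $\Re(s) \geq 1$, maximal domain of analyticity $U \setminus E$, and such that $\zeta_\chi'/\zeta_\chi$ has singular part $G_a(1/(s-a))$ at each $a \in E$; in particular $\zeta_\chi$ is zero-free on $U \setminus E$, since its logarithmic derivative is analytic there.

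It remains to verify the limit assertion. Fix $a \in E$ and set $Q(s) = \zeta_\chi(s)/g_a(1/(s-a))$, which is analytic and zero-free on a punctured disk $0 < |s-a| < r$. By construction $Q'/Q = \zeta_\chi'/\zeta_\chi - G_a(1/(s-a))$ has no singular part at $a$ and so extends to an analytic function $B_a$ on the full disk; choosing an antiderivative $F_a$ of $B_a$ on this simply connected disk shows that $Q \cdot e^{-F_a}$ has vanishing logarithmic derivative on the punctured disk and is therefore a nonzero constant, so $Q$ extends analytically to $a$ with a nonzero value $c_a$. The main point of care is the algebraic identification of $G_a$ so that Theorem \ref{thm3} applies and delivers exactly the right singular parts; once this is in place, the rest is immediate, and the maximality of the domain of analyticity is inherited directly from Theorem \ref{thm3}.
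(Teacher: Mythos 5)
Your proof is correct and follows the paper's intended route: the paper's own proof of Theorem~\ref{thm4} is literally the single line ``This follows from Theorem~\ref{thm3},'' so the content is exactly the reduction you carry out. Your translation $g_a(w)=w^{n_a}h_a(w)=w^{n_a}e^{H_a(w)}\mapsto G_a(w)=-n_aw-w^2H_a'(w)$ is the right one, the chain-rule identity $\frac{d}{ds}\log g_a(1/(s-a))=G_a(1/(s-a))$ checks out, $G_a$ is entire with $G_a(0)=0$ as Theorem~\ref{thm3} requires, and your local antiderivative argument correctly upgrades ``no singular part of the logarithmic derivative at $a$'' to ``$\zeta_\chi/g_a(1/(s-a))$ extends analytically and nonvanishingly across $a$.'' One small caveat worth noting (an imprecision inherited from the theorem statement itself rather than a defect in your argument): if some $g_a$ is a nonzero constant then $G_a\equiv 0$, which Theorem~\ref{thm3} explicitly excludes, and indeed in that case the ``maximal domain of analyticity is $U\setminus E$'' conclusion would be false; the intended hypothesis is that each $g_a$ is non-constant, and under that reading your reduction is complete.
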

\begin{proof}
 This follows from Theorem \ref{thm3}.
\end{proof}
 An immediate  consequence of Theorem \ref{thm4} is the following corrollary 
 \begin{cor} \label{cor1}
  Let $\{s:\Re(s)>1\} \subseteq U$ be an open connected set. Then there exists some Helson zeta-function $\zeta_\chi$ such that $\zeta_\chi$ may be analytically extended to a zero-free holomorphic function on  $U$ and furthermore $U$ is its maximum domain of holomorphicity and meromorphicity.
  \end{cor}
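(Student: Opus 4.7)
The plan is to invoke Theorem~\ref{thm4} with the trivial choice $E = \emptyset$ (so that no family of prescribed $g_a$'s is needed). That theorem then hands us a completely multiplicative unimodular $\chi$ for which $\zeta_\chi(s)$ is analytic on $U\setminus E = U$ and whose maximal domain of analyticity is $U$. What remains is to verify two facts that are not literally in the statement of Theorem~\ref{thm4}: that $\zeta_\chi$ is zero-free on $U$, and that $U$ is also the maximal domain of meromorphicity.

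For zero-freeness, I would look back along the construction that produces $\zeta_\chi$, tracing Theorem~\ref{thm4} through Theorem~\ref{thm3} down to Theorem~\ref{thm1}. Because $E = \emptyset$, no nontrivial singular parts for $\zeta_\chi'/\zeta_\chi$ are prescribed anywhere in $U$, so the constructed logarithmic derivative is actually analytic on all of $U$. A zero or pole of $\zeta_\chi$ inside $U$ would produce a simple pole of $\zeta_\chi'/\zeta_\chi$ with nonzero integer residue at that point, which is incompatible with the analyticity just noted. Hence $\zeta_\chi$ has neither zeros nor poles in $U$.

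The maximal meromorphicity domain needs slightly more care, because the maximal analytic domain being $U$ does not by itself preclude a meromorphic extension with purely polar behaviour on $V\setminus U$. Here I would appeal to the proof of Theorem~\ref{thm3}: the auxiliary function $h_2$ carries simple poles on a set $\{b_n\}$ dense in $K_2 = \partial U \setminus((1+i\R)\cup E)$ with non-integer residues $(1-\Re(b_n))/2^n$, and $h_1 = H_1'$ carries essential singularities dense in $K_1 = (1+i\R)\setminus U$. Fed into the pipeline of Theorem~\ref{thm1}, these produce singularities of $\zeta_\chi'/\zeta_\chi$ that are incompatible with $\zeta_\chi$ being meromorphic at any $b_n$ or at any point of $K_1$. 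Consequently $\zeta_\chi$ admits no meromorphic continuation past any point of $\partial U = K_1 \cup K_2$.

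The main obstacle is this last step: Theorem~\ref{thm4} records only the maximal domain of analyticity, so the stronger meromorphic statement has to be pulled out of the internals of Theorem~\ref{thm3}'s construction, together with the observation that non-integer residues in the logarithmic derivative translate into branch-type, hence \emph{non-polar}, singularities of $\zeta_\chi$ itself. Once that is recognised, the corollary really is an immediate consequence of Theorem~\ref{thm4}.
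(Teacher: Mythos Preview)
Your approach coincides with the paper's: both derive the corollary from Theorem~\ref{thm4} with $E=\emptyset$, and indeed the paper offers no further argument beyond declaring it ``an immediate consequence''. You go further than the paper by correctly flagging that zero-freeness on $U$ and maximality of $U$ as a domain of \emph{meromorphicity} are not literally contained in the statement of Theorem~\ref{thm4}, and you fill both gaps by unwinding the construction in the proof of Theorem~\ref{thm3}. The zero-freeness argument (no prescribed singular parts, so $\zeta_\chi'/\zeta_\chi$ is analytic on $U$, hence no zeros or poles) is clean and correct.

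One small wobble in the meromorphicity step: the decisive point is that the $b_n$ (respectively the $a_n$) are \emph{dense} in $K_2$ (respectively $K_1$), so any meromorphic extension of $\zeta_\chi'/\zeta_\chi$, and hence of $f=H_1+h_2$, past a point of $\partial U$ would force $h_2$ (or $H_1$) to be meromorphic on a neighbourhood containing infinitely many of its singular points, which accumulate---an impossibility. Because the $b_n$ are not isolated singularities of $h_2$, one cannot a~priori speak of ``the residue of $h_2$ at $b_n$'', so the non-integer residue observation, while suggestive, is not really the operative mechanism; the density of the $b_n$ and $a_n$ is what actually blocks meromorphic continuation. With that adjustment your argument is complete and in fact more detailed than the paper's own.
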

  This result  improve on results of Bhowmik and Schlage-Puchta \cite{BhoS}. They \cite[Theorem 1]{BhoS} showed the existence of a Dirichlet series $\sum_{n=1}^\infty a_n n^{-s}$ having any given open connected set including a half-plane $\Re(s) \geq \sigma_0$ as domain of meromorphicity. In \cite[Theorem 2, Theorem 3]{BhoS} they prove similar existence results for Dirichlet-series with an Euler-product. In particular our Corollary  \ref{cor1} implies \cite[Theorem 3]{BhoS}, without assuming the Riemann hypothesis and allowing $\Re(s) \leq 1$ rather than the strip $1/2 \leq \Re(s) \leq 1$. A particular case of Corollary \ref{cor1} is if $U$ is the half plane $\Re(s)>1$ then
 \begin{cor} \label{cor2}
  There exist some Helson zeta-function $\zeta_\chi$ such that $\zeta_\chi$ can not be analytically extended beyond the half plane $\Re(s)>1$.
  \end{cor}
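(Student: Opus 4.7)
The plan is simply to specialize Corollary \ref{cor1} to the case where $U$ is itself the open half plane $\Re(s) > 1$. This set is open, connected, and trivially contains $\{s : \Re(s) > 1\}$, so the hypothesis of Corollary \ref{cor1} is satisfied. The corollary then produces a Helson zeta-function $\zeta_\chi$ whose maximum domain of holomorphicity (and of meromorphicity) is exactly $\{s : \Re(s) > 1\}$. By the very definition of maximum domain of holomorphicity, no analytic continuation of $\zeta_\chi$ to any larger open set is possible, which is precisely the statement of Corollary \ref{cor2}.

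There is effectively nothing to do beyond this specialization, and no real obstacle arises at this stage. The genuine content was already expended in Theorem \ref{thm3}, whose construction pins down $1 + i\R$ as part of the natural boundary by means of the auxiliary function $H_1(s)$, built from a dense sequence of points on the line $\Re(s) = 1$ using the series defining $h_1(s)$. Once Theorem \ref{thm3} is in place, Corollary \ref{cor1} assembles the conclusion for arbitrary open connected $U \supseteq \{s : \Re(s) > 1\}$, and Corollary \ref{cor2} is just the edge case $U = \{s : \Re(s) > 1\}$, in which $\partial U = 1 + i\R$ coincides entirely with the set $K_1$ appearing in the proof of Theorem \ref{thm3}.
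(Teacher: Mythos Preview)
Your proposal is correct and matches the paper's own derivation exactly: the paper obtains Corollary~\ref{cor2} simply as the special case $U=\{s:\Re(s)>1\}$ of Corollary~\ref{cor1}. Your additional remarks about $H_1$ and $K_1$ in the proof of Theorem~\ref{thm3} are accurate commentary but not needed for the argument.
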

  Similarly if $U=\C$ we obtain from Corollary \ref{cor1} that
  \begin{cor} \label{cor3}
  There exist some entire zero-free   Helson zeta-function $\zeta_\chi$.
 \end{cor}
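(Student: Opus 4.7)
The plan is to obtain Corollary \ref{cor3} as an immediate specialization of the preceding results, with no new work required. The shortest route is to apply Corollary \ref{cor1} with $U=\C$: the set $\C$ is open, connected, and trivially contains the half plane $\Re(s)>1$, so the hypothesis is met and the conclusion yields a Helson zeta-function $\zeta_\chi$ which extends analytically to a zero-free holomorphic function on $\C$, i.e.\ to an entire zero-free Helson zeta-function, which is precisely the required statement.

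Two alternative routes of comparable brevity are available. First, one can apply Theorem \ref{thm2} to the constant function $f\equiv 1$: here $f'(s)/f(s)\equiv 0$ is trivially continuous on $\Re(s)\geq 1$, so Theorem \ref{thm2} produces an entire zero-free function $g$ and a Helson zeta-function $\zeta_\chi$ with $1=\zeta_\chi(s) g(s)$ for $\Re(s)>1$, whence $\zeta_\chi=1/g$ is entire and zero-free. Second, one can apply Theorem \ref{thm4} with $U=\C$ and empty singular set $E=\emptyset$, making the pole-prescription vacuous and yielding directly a Helson zeta-function analytic on all of $\C$; zero-freeness would then follow from the exponentiation hidden inside Theorem \ref{thm2}.

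There is no real obstacle here, since every nontrivial step---the construction of $\chi$ with the bounds $|r_j(x)|\ll_j x^{j\theta}$ via Lemma \ref{lem7}, the Mittag-Leffler-style prescription of singular parts, and the exponentiation producing zero-freeness---has already been absorbed into Theorem \ref{thm1} and its consequences. The sole verification needed is the trivial topological observation that $U=\C$ satisfies the hypothesis of Corollary \ref{cor1}.
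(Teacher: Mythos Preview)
Your primary route---specializing Corollary \ref{cor1} to $U=\C$---is exactly what the paper does; the corollary is stated immediately after the sentence ``Similarly if $U=\C$ we obtain from Corollary \ref{cor1} that\ldots''. Your alternative derivations via Theorem \ref{thm2} or Theorem \ref{thm4} are also valid, but the paper takes the first path.
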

 This corollary is originally due to Fedor Nazarov (fedja) who sketched a proof of this result on mathoverflow \cite{fedja} in 2010.
 Corollary \ref{cor3} can be compared with the fact that if $|\chi(p)|=1$ are chosen independently, uniformly and randomly, then almost surely the maximal domain of analyticity will be the half plane $\Re(s)>1/2$ (Saksman-Webb \cite{SaWe}, a similar result is also due to Queffelec \cite{Que}, see discussion in Bhowmik-Matsumoto \cite{BhoMat}). In view of the fact that a random Helson zeta-function is almost surely analytic, zero-free and pole free for $\Re(s)>1/2$ (Helson \cite{Helson}), and the fact that Seip \cite{Seip} and Bochkov-Romanov \cite{BoRo} shows that this need not be true, this might not be too surprising. We have a lot of freedom in constructing the multiplicative function  $\chi$ on the primes, as it can be done independently and there are a lot of primes.

In the special case where $g_a(s)=s^n$ for an integer $n$ in Theorem \ref{thm4} we get the existence of a  meromorphic Helson zeta-function with prescribed poles and zeros in a domain.
\begin{thm} \label{thm5}
 Let $U \subseteq \C$ be an open connected set containing the half plane $\Re(s) > 1$ and let $\ZZ \subset U \cap \{s: \Re(s)<1\}$ be any signed multiset without limit points on $U \cup(1+i\R)$. Then there exists a completely multiplicative unimodular function $\chi$ such that the Helson zeta-function $\zeta_\chi(s)$ has meromorphic continuation from $\Re(s)>1$ to $U$ with prescribed poles and zeros with given multiplicites from $\ZZ$. Furthermore we may choose $\chi$ such that the maximal domain of meromorphicity of $f$ is  $U$.
\end{thm}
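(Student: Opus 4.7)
The plan is to reduce Theorem~\ref{thm5} to Theorem~\ref{thm3} (the special case of Theorem~\ref{thm4} that is hinted at in the remark preceding the statement) by encoding the prescribed zeros and poles of $\zeta_\chi$ as simple-pole singular parts of $\zeta'_\chi/\zeta_\chi$ with integer residues, and then exponentiating to recover the meromorphic structure of $\zeta_\chi$ itself.

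First I would unpack the signed multiset $\ZZ$ as a discrete set $E \subset U \cap \{\Re(s)<1\}$ of distinct points equipped with non-zero integer multiplicities $n_a$ (taken positive at prescribed zeros and negative at prescribed poles), and for each $a \in E$ define the entire function $g_a(u) = n_a u$. Each $g_a$ is not identically zero and satisfies $g_a(0)=0$, so the hypotheses of Theorem~\ref{thm3} are met for the data $(U, E, \{g_a\})$. Applying that theorem furnishes a Helson zeta-function $\zeta_\chi$ with analytic continuation to $U \setminus E$, continuous extension to $\Re(s) \geq 1$, and whose logarithmic derivative has singular part $n_a/(s-a)$ at each $a \in E$.

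Next I would promote these simple-pole singular parts of $\zeta'_\chi/\zeta_\chi$ into the prescribed zeros and poles of $\zeta_\chi$. Near any $a \in E$, the difference $\zeta'_\chi/\zeta_\chi - n_a/(s-a)$ is holomorphic, so an antiderivative of $\zeta'_\chi/\zeta_\chi$ near $a$ has the form $n_a \log(s-a) + \varphi_a(s)$ with $\varphi_a$ holomorphic at $a$; exponentiating gives $\zeta_\chi(s) = (s-a)^{n_a}\, e^{\varphi_a(s)}$ in a neighborhood of $a$. Because $n_a$ is an integer, this germ is single-valued, contributing a zero of order $n_a$ when $n_a>0$ and a pole of order $|n_a|$ when $n_a<0$. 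Together with analyticity on $U \setminus E$, this exhibits $\zeta_\chi$ as meromorphic on $U$ with exactly the zero/pole data encoded by $\ZZ$.

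Finally, for the maximal-domain-of-meromorphicity claim the idea is that the boundary obstructions placed in the proof of Theorem~\ref{thm3} (the summand $H_1$ near $K_1 = (1+i\R) \setminus U$ and the summand $h_2$ near $K_2 = \partial U \setminus (1+i\R)$) block meromorphic as well as analytic continuation across $\partial U$. The summand $h_2$ contributes simple poles to $\zeta'_\chi/\zeta_\chi$ at a dense set $\{b_n\} \subset K_2$ with residues $(1-\Re(b_n))/2^n$, which we may arrange to be irrational by choosing the dense sequence suitably; upon exponentiation these become dense branch-point singularities of $\zeta_\chi$ of non-integer order on $K_2$, and since any meromorphic extension across a point of $K_2$ must be single-valued with at worst polar singularities, no such extension can exist. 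A parallel argument using $H_1$, whose derivative $h_1$ has dense essential singularities on $K_1$, rules out meromorphic continuation across $K_1$ as well. The hard part will be this last step: carefully verifying that the irrational-residue and essential-singularity obstructions truly survive the exponential passage from $\zeta'_\chi/\zeta_\chi$ to $\zeta_\chi$ and cannot be cancelled by the entire correction furnished by Theorem~\ref{thm1} inside the proof of Theorem~\ref{thm3}, so that $U$ emerges as a genuine natural boundary for meromorphic continuation.
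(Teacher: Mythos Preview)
Your approach—applying Theorem~\ref{thm3} with $g_a(u)=n_a u$ so that $\zeta'_\chi/\zeta_\chi$ has residue $n_a$ at each $a$, and then exponentiating—is exactly the paper's: it presents Theorem~\ref{thm5} as the special case $g_a(s)=s^n$ of Theorem~\ref{thm4}, which in turn is obtained from Theorem~\ref{thm3} in one line. Your extra scrutiny of whether the boundary obstructions $H_1$ and $h_2$ built into the proof of Theorem~\ref{thm3} block \emph{meromorphic} (not merely analytic) continuation is well placed, but the paper does not supply that argument either; it simply asserts the maximal-domain-of-meromorphicity claim as part of the special-case reduction.
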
 
This improves on results of Seip \cite{Seip} who proved 
the same result for analytic functions with prescribed zeros in the strip $21/40<\Re(s)<1$
and results of Bochkov-Romanov \cite{BoRo} who also allow poles. 
An advantage with our method is that it gives us a result in the full complex plane. 
While Seip and Bochkov-Romanov proved that the strip may be extended to $1/2<\Re(s)<1$ conditional on the Riemann hypothesis, our result is valid unconditionally, and it gives prescibed poles and zeros on the full half plane $\Re(s)<1$. Before we prove the lemmas needed for our results we mention some possible further developments.
 As in Bochkov \cite{Bochkov} the proofs in our paper can be modified so that the results follows if  we 
 assume that the values of $\chi(p)$ are chosen from some finite set such that 0 is in the interior of the convex hull of such a set (such as cubic unit roots). If $\chi(p) \in \{-1,1\}$ similar results follows as long as $E$ and $U$ are symmetric with respect to the real axis. 
 It is also possible to prove our results for Beurling-Helson zeta-functions, 
 i.e. Dirichlet series over Beurling generalised integers with completely multiplicative unimodular coefficients, assuming some sufficient prime number theorem for Beurling primes in short intervals\footnote{By using some ideas in fedja \cite{fedja} it seems that it is possible to assume a weaker result on Beurling primes in short intervals than the "Hoheisel type" we use in Lemma \ref{hoh}}.
By assuming some conditions on the set of prescribed zeros, Seip \cite{Seip} also 
proved that $\zeta_\chi(s)$ can be chosen to be universal (in the Voronin sense) in 
some strip. We can weaken the conditions of Seip and also prove some universality results for our Helson zeta-functions. In particular we do not need the Helson zeta-function to satisfy a 
density hypthesis of the usual kind, and we need neither the Helson zeta-function
 nor its logarithm to satisfy a bounded mean square on some line $\Re(s)=\sigma<1$. 
 This will give us the first known Voronin universality result of this kind. We will do this in future work. Another interesting question is whether we can obtain some analogue of our results in several complex variables. Also here we have some ideas and will hopefully treat this in future work.

We now proceed to prove the lemmas needed to prove our main results.

\begin{lem} \label{lem1}
  Let $f$ be a continuous function on $\Re(s) \geq 1$ which is analytic for $\Re(s)>1$. Then there exists an entire function $g$ and a function $q(x)=o(1)$ such that 
   $$ f(s)=\int_1^\infty  q(x) x^{-s} dx +g(s),  \qquad (\Re(s) >1). $$
 \end{lem}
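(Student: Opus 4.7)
My plan is to recast the statement as a Laplace-transform decomposition and construct $q$ via an inverse Laplace transform of a regularised version of $f$. After the substitutions $w = s-1$, $x = e^u$, $h(u) := q(e^u)$, $F(w) := f(1+w)$, $G(w) := g(1+w)$, the desired identity becomes
\[
F(w) = G(w) + \int_0^\infty h(u)\,e^{-wu}\,du, \qquad (\Re w > 0),
\]
with $G$ entire and $h(u)\to 0$. Since $h$ is bounded, the integral converges absolutely on $\{\Re w > 0\}$, and setting $q(x) := h(\log x)$ returns the original formulation.

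The natural candidate for $h$ is the Bromwich inverse Laplace transform
\[
h(u) = \frac{1}{2\pi i}\int_{c-i\infty}^{c+i\infty} F(w)\,e^{wu}\,dw,
\]
which in general diverges because $F$, being merely continuous on $\{\Re w \geq 0\}$, need not decay along vertical lines. I would therefore first subtract an entire function $G$ engineered to tame this growth: by Carleman's uniform approximation theorem, for any continuous $\varepsilon\colon \R \to (0,\infty)$ there exists an entire $G$ with $|F(i\tau) - G(i\tau)| < \varepsilon(\tau)$ on $\R$; taking $\varepsilon(\tau) = (1+|\tau|)^{-N}$ for large $N$ forces $F - G$ to decay rapidly on the boundary $\Re w = 0$. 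A Phragm\'en--Lindel\"of argument in a narrow strip $0\leq \Re w \leq c$ then propagates this decay to an interior vertical contour $\Re w = c > 0$.

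With $F-G$ of sufficient decay on such a contour, the Bromwich integral defines a continuous bounded $h$ on $[0,\infty)$. The property $h(u)\to 0$ as $u\to\infty$ follows from the Riemann--Lebesgue lemma applied to the $L^1$ function $\tau \mapsto (F-G)(c+i\tau)$. A Fubini argument verifies $\int_0^\infty h(u) e^{-wu}\,du = F(w) - G(w)$ for $\Re w > c$, and analytic continuation extends the identity to all of $\{\Re w > 0\}$. Reverting the change of variables yields the claim with $g(s) = G(s-1)$ entire and $q(x) = h(\log x) = o(1)$.

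The principal obstacle is the regularisation step. Without any a priori growth bound on $f$ along vertical lines, one must lean on a strong uniform approximation theorem (Carleman's, or possibly Arakelyan's applied to a suitable closed set) coupled with a Phragm\'en--Lindel\"of propagation to guarantee that the Bromwich contour can be placed inside the right half-plane with $F-G$ decaying fast enough for the inversion integral to define a continuous function that vanishes at infinity. Once this engineering is in place, the rest is standard Laplace-transform theory.
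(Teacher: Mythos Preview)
Your overall architecture---subtract an entire approximant to force decay, then invert a Laplace/Mellin transform---matches the paper's. The gap lies exactly where you flag it. Carleman's theorem produces an entire $G$ with $|F(i\tau)-G(i\tau)|<(1+|\tau|)^{-N}$ on the imaginary axis, but imposes no constraint whatsoever on $G$ off that line; nothing prevents $F-G$ from behaving like $-e^{e^{w}}$ for $\Re w>0$. Phragm\'en--Lindel\"of in a strip $0\le\Re w\le c$ needs bounds on \emph{both} edges together with an a priori growth bound inside; you have information only on the left edge, so decay cannot be propagated to the Bromwich contour $\Re w=c$.

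The paper closes this by approximating on the full closed half-plane rather than on its boundary: it applies Arakelyan's theorem to $s^{2}f(s)$ on $\{\Re s\ge1\}$, obtaining an entire $g_{1}$ with $|s^{2}f(s)-g_{1}(s)|\le1$ uniformly there. Dividing by $|s|^{2}$ and splitting off the entire part $(g_{1}(s)-g_{1}(0)-g_{1}'(0)s)/s^{2}$ leaves a remainder $f_{1}$ with $|f_{1}(s)|\le|s|^{-2}$ throughout the half-plane, plus two explicit terms $g_{1}'(0)s^{-1}+g_{1}(0)s^{-2}$. A Hardy-class/Paley--Wiener lemma (cited from Bochkov--Romanov) then gives $f_{1}(s)=\int_{1}^{\infty}q_{1}(x)x^{-s}\,dx$ with $q_{1}=o(1)$, and the polar terms are absorbed via $\int_{1}^{\infty}x^{-s-1}\,dx=s^{-1}$ and $\int_{1}^{\infty}(\log x)\,x^{-s-1}\,dx=s^{-2}$. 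If you want to keep your framework, replace Carleman-on-the-line by Arakelyan on the full half-plane (this is the ``suitable closed set'' you allude to); the Phragm\'en--Lindel\"of step then becomes unnecessary and the rest of your inversion argument goes through.
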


\begin{proof}
 By applying the Arakelyan approximation theorem  on the function $s^2 f(s)$ and the half-plane $\Re(s) \geq 1$  there exists some entire function $g_1(s)$ such that
 \begin{gather*}
    \abs{g_1(s)-  s^2 f(s)} \leq 1, \qquad (\Re(s) \geq 1).
 \end{gather*}
 By dividing this inequality by $|s|^2$ we get that 
 \begin{gather} \label{abaq}
    \abs{f_1(s)} \leq |s|^{-2}, \qquad (\Re(s) \geq 1), \intertext{where}
 f_1(s)= f(s)-g_1'(0)s^{-1}-g_1(0) s^{-2}  -g(s),  \, \, \, \, \, \text{and} \, \, \, \, \, g(s)=(g_1(s)-g_1(0)-g_1'(0)s) s^{-2} \, \, \, \, \,  \label{aaa1}\end{gather}
  is an entire function.
  It follows by  \eqref{abaq} and \cite[Lemma 2]{BoRo}\footnote{ Lemma 2 in \cite{BoRo} follows  by a Hardy class argument and a Payley-Wiener theorem.} 
 that
   \begin{gather} 
   f_1(s)=\int_{1}^\infty q_1(x) x^{-s} dx, \qquad q_1(x)=o(1), \label{aaa2}
  \end{gather}
 for $\Re(s)>1$. 
 Since $\int_1^\infty x^{-s-1}dx=s^{-1}$ and $\int_1^\infty (\log x) x^{-s-1} dx=s^{-2}$ it follows´from \eqref{aaa1} and \eqref{aaa2} that 
 \begin{gather*}
  f(s)= \int_1^\infty q(x) x^{-s} dx + g(s),   \intertext{where}  q(x)=q_1(x)+(g_1'(0)+g_1(0) \log x)x^{-1}=o(1).
\end{gather*}
\end{proof}

We  now proceed to prove Lemma \ref{lem7}, which is our main Lemma needed to prove our Theorems. In order to prove this result we need more Lemmas.

\begin{lem} \label{hoh}
There exist some $\theta<1$ such that for some constant $c>0$, some function $\delta:\R_{>0} \to \R_{>0}$ such that $\lim_{x \to \infty} \delta(x)=0$ and for $0 \leq a < b \leq 1$, $x \geq 2$ then
\begin{gather} \label{theta}
 \pi(x+bx^\theta)-\pi(x+ax^\theta) \geq  c(b-a-\delta(x)) \frac{x^\theta} {\log x}
\end{gather}
\end{lem}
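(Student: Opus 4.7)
The plan is to reduce this to a standard Hoheisel-type prime number theorem in short intervals, with $\theta$ chosen to be slightly larger than an admissible Hoheisel exponent so that there is enough slack to absorb the interval's starting point $x+ax^\theta$ into $x$ and to handle the case when $b-a$ is small via the $\delta(x)$ correction.

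Concretely, I would start by fixing an exponent $\theta_0<1$ for which the short interval prime counting function satisfies
$$
\pi(y+h)-\pi(y)\geq c_0\,\frac{h}{\log y}, \qquad (h\geq y^{\theta_0},\ y\geq Y_0),
$$
for some absolute constants $c_0>0$ and $Y_0$. Any of the classical Hoheisel-type theorems works here; it suffices to invoke, say, Baker--Harman--Pintz with $\theta_0=0.525$, or even the original Hoheisel exponent, since we only need \emph{some} $\theta_0<1$. Then I would pick $\theta$ with $\theta_0<\theta<1$ and define the error function by
$$
\delta(x):=\begin{cases} 1, & x<X_0,\\ 3\, x^{\theta_0-\theta}, & x\geq X_0,\end{cases}
$$
where $X_0\geq \max(Y_0,2)$ is chosen large enough that $3x^{\theta_0-\theta}<1$ and $2x\geq Y_0$ for $x\geq X_0$. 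Since $\theta_0<\theta$, we have $\delta(x)\to 0$.

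The verification then splits into two cases. If $b-a\leq \delta(x)$ (which in particular covers every $x<X_0$, since then $\delta(x)=1\geq b-a$), the right-hand side of \eqref{theta} is non-positive while the left-hand side is non-negative, so the inequality holds trivially. Otherwise $x\geq X_0$ and $b-a>3x^{\theta_0-\theta}$. Writing $y=x+ax^\theta$, we have $y\leq 2x$ and $\log y\leq 2\log x$, while the interval length is
$$
h:=(b-a)x^\theta>3\,x^{\theta_0}\geq (2x)^{\theta_0}\geq y^{\theta_0},
$$
so the Hoheisel-type estimate applies and yields
$$
\pi(x+bx^\theta)-\pi(x+ax^\theta)=\pi(y+h)-\pi(y)\geq c_0\,\frac{h}{\log y}\geq \frac{c_0}{2}\cdot\frac{(b-a)x^\theta}{\log x},
$$
which is at least $c\,(b-a-\delta(x))\,x^\theta/\log x$ with $c=c_0/2$, since $b-a\geq b-a-\delta(x)$.

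There is really no serious obstacle here: once the Hoheisel-type input is granted, the only mild subtlety is handling the range of small $x$ and the range of $b-a$ comparable to $\delta(x)$ uniformly, which is precisely what the piecewise definition of $\delta$ and the absorption $(b-a-\delta(x))\leq (b-a)$ take care of. The heavy lifting is entirely contained in the cited short-interval prime result; the footnote in the lemma statement already suggests that any Hoheisel-type exponent suffices for the applications in this paper, and indeed the argument would go through equally well with weaker hypotheses on the Beurling primes in short intervals in the generalised setting alluded to by the author.
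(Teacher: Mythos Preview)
Your proof is correct and follows essentially the same approach as the paper: both pick $\theta$ strictly larger than a Hoheisel exponent $\beta$ (your $\theta_0$) and observe that the required inequality then follows from the classical short-interval prime result, with the $\delta(x)$ term absorbing the slack. The paper's proof is a one-line remark (``Our lemma follows immediately for any $\beta<\theta<1$''), whereas you have spelled out the choice of $\delta(x)$ and the case split explicitly, but the underlying argument is the same.
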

\begin{proof}
 Hoheisel \cite{Hoheisel} gave the first proof that for some $\beta<1$ then
\begin{gather} \label{prime}
 \pi(x+x^\beta)- \pi(x) \gg \frac{x^\beta} {\log x}.
\end{gather}
More precisely he proved that \eqref{prime} holds for any $1-1/33000<\beta<1$.
Our lemma follows immediately for any $\beta<\theta<1$. We can also use some more modern result. For example from the result of Heath-Brown \cite{HeBr} we may choose $c=1$ and $\theta=7/12$ and by the result of Baker-Harman-Pintz \cite{BaHaPi} we may choose any $21/40<\theta<1$ for some $c>0$.
\end{proof}

\begin{lem} \label{lem3}
  Let $x_1,\ldots,x_n \in \C^d$ be vectors and $a_1,\ldots,a_n$ be complex numbers such that $|a_j| \leq 1$. Then there exist complex numbers $b_1,\ldots,b_n$ such that $|b_j|=1$ and
 $$\norm{\sum_{j=1}^n b_j x_j-\sum_{j=1}^n a_j x_j}_\infty \leq d \max_{1 \leq j \leq n} \norm{x_j}_\infty.$$
\end{lem}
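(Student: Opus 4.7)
The plan is to produce $b_1,\ldots,b_n$ as a perturbation of an extreme point of the convex, compact slice
\begin{gather*}
B := \Bigl\{b \in \C^n : |b_j| \leq 1 \text{ for all } j, \ \sum_{j=1}^n b_j x_j = \sum_{j=1}^n a_j x_j\Bigr\}.
\end{gather*}
This set contains $a$, hence is nonempty, and by the Krein--Milman theorem it has an extreme point $b^\ast$.

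The crux of the argument is to show that $I := \{j : |b_j^\ast|<1\}$ satisfies $|I| \leq d$. Consider a direction $v \in \C^n$ along which $b^\ast + tv \in B$ for $t \in (-\epsilon,\epsilon)$. The linear constraint forces $v \in \ker \Phi$ where $\Phi(v):=\sum_j v_j x_j \in \C^d$, and at indices with $|b_j^\ast|=1$ the expansion
\begin{gather*}
|b_j^\ast + t v_j|^2 = 1 + 2t\,\Re(\overline{b_j^\ast}\, v_j) + t^2 |v_j|^2
\end{gather*}
cannot remain $\leq 1$ for small $t$ of both signs unless $v_j = 0$. So the admissible directions form exactly $\ker \Phi|_{\C^I}$, and extremality forces this kernel to be trivial. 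Since $\Phi|_{\C^I}$ is a $\C$-linear map $\C^I \to \C^d$, injectivity requires $|I| \leq d$.

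I would then round: for each $j \in I$, replace $b_j^\ast$ by $b_j := b_j^\ast/|b_j^\ast|$ (or any unimodular number when $b_j^\ast=0$), giving $|b_j - b_j^\ast| = 1 - |b_j^\ast| \leq 1$; for $j \notin I$, set $b_j = b_j^\ast$. All $|b_j| = 1$, and since $\sum_j b_j^\ast x_j = \sum_j a_j x_j$, a coordinatewise triangle inequality yields
\begin{gather*}
\norm{\sum_{j=1}^n b_j x_j - \sum_{j=1}^n a_j x_j}_\infty = \norm{\sum_{j \in I}(b_j - b_j^\ast)\, x_j}_\infty \leq |I|\cdot \max_{1 \leq j \leq n} \norm{x_j}_\infty \leq d \max_{1 \leq j \leq n} \norm{x_j}_\infty.
\end{gather*}

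The delicate step I anticipate is the second-order observation that pins $v_j=0$ whenever $|b_j^\ast|=1$; without it one would mistakenly admit tangential perturbations along the unit circle, the bound on $|I|$ would break, and the final count would deteriorate. Everything else is a routine extreme-point/rounding argument.
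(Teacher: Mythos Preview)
Your proof is correct and follows essentially the same route as the paper's. The paper applies Bagchi's Lemma~4.1 recursively to force all but at most $d$ coefficients onto the unit circle while preserving the sum, then rounds the remaining ones; your Krein--Milman argument is the one-shot, self-contained version of that same iteration, landing directly on an extreme point of $B$ with $|I|\le d$ (and your second-order observation ruling out tangential perturbations is exactly what makes the complex case go through).
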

\begin{proof}
 We may use Lemma 4.1 in Bagchi \cite{Bagchi}  recursively\footnote{The result works in any norm.}  until the remaining vectors are less than or equal to the dimension $d$.
\end{proof}               
\begin{lem} \label{lem4}
  Let $c>0$ and  $\theta<1$ be admissible in Lemma \ref{hoh} and let $w(x)=o(1)$ and $J \in \Z^+$. Then there exist some $X_0(J)$ such that if $x \geq X_0(J)$ and $|\xi_j| \leq w(x)  x^\theta$ for $1 \leq j \leq J$   the following linear system of equations has a solution
\begin{gather}
 \sum_{\substack{x<p<x+x^\theta \\ p \text{ prime}}} a_p \left(\frac{x+x^\theta-p}{x^\theta} \right)^{j-1} \log p =\xi_j, \qquad (j=1,\ldots,J),
\end{gather}
with $|a_p| \leq 1$.
\end{lem}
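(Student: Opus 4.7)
The plan is to concentrate the unknowns $a_p$ on $J$ disjoint subintervals $I_1,\ldots,I_J$ of $(x,x+x^\theta)$, one per equation, so that the resulting $J \times J$ linear system is well-conditioned. Fix nodes $u_k = k/(J+1)$ for $1 \leq k \leq J$, and let $I_k$ be the interval of length $\epsilon x^\theta$ centered at $x + (1-u_k)x^\theta$, where $\epsilon = \epsilon_J > 0$ is a small constant to be chosen later. For $p \in I_k$ one then has $y_p := (x+x^\theta-p)/x^\theta$ within $\epsilon/2$ of $u_k$. Setting $a_p = \alpha_k$ for $p \in I_k$ and $a_p = 0$ elsewhere, the system reduces to $V\alpha = \xi$ with $V_{jk} = \sum_{p \in I_k} y_p^{j-1}\log p$.

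By Lemma \ref{hoh}, for $x \geq X_0(J)$ large enough that $\delta(x) \leq \epsilon/2$, each $I_k$ contains at least $(c\epsilon/2)\,x^\theta/\log x$ primes, so $T_k := \sum_{p \in I_k} \log p \geq (c\epsilon/2)\,x^\theta$. Factor $V = DW$ with $D = \operatorname{diag}(T_1,\ldots,T_J)$ and $W_{jk} = T_k^{-1} V_{jk}$, and write $W = U + E$, where $U_{jk} = u_k^{j-1}$ is the Vandermonde matrix at the distinct nodes $u_k$. Since $|y_p^{j-1} - u_k^{j-1}| \leq (j-1)\epsilon/2 \leq J\epsilon/2$ for $p \in I_k$, the entries of $E$ satisfy $|E_{jk}| \leq J\epsilon/2$, so $\|E\|_\infty \leq J^2\epsilon/2$.

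Because the $u_k$ are distinct, $U$ is invertible with $\|U^{-1}\|_\infty \leq C'_J$ depending only on $J$. Now choose $\epsilon = \epsilon_J$ small enough that $\|U^{-1}E\|_\infty \leq C'_J J^2 \epsilon/2 \leq 1/2$. A standard Neumann series argument then gives $\|W^{-1}\|_\infty \leq 2C'_J$, hence $\|V^{-1}\|_\infty \leq 2C'_J/\min_k T_k \leq C_J/x^\theta$ for some constant $C_J$ depending on $J$.

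Consequently $|\alpha_k| \leq C_J \|\xi\|_\infty/x^\theta \leq C_J w(x)$. Since $w(x) = o(1)$, enlarging $X_0(J)$ if necessary so that $C_J w(x) \leq 1$ yields $|a_p| = |\alpha_k| \leq 1$, as required. The main obstacle is the perturbation analysis in the middle paragraph: the length $\epsilon x^\theta$ must be chosen small enough that the deviation from the Vandermonde model is negligible in operator norm, yet large enough that Lemma \ref{hoh} still produces many primes in each $I_k$; the above construction with a fixed $\epsilon_J$ achieves both thanks to $\delta(x) \to 0$.
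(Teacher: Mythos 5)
Your proof is correct and follows essentially the same strategy as the paper: concentrate the coefficients on $J$ short subintervals near equally spaced points, reduce to a $J\times J$ system whose matrix is a small perturbation of a Vandermonde matrix, and use $w(x)=o(1)$ to force $|a_p|\leq 1$. The only cosmetic difference is that you control the inverse via a Neumann-series perturbation bound, whereas the paper uses continuity of the determinant together with Cramer's rule.
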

\begin{proof}
  Let $0<\varepsilon<1/(2(J+1))$. By Lemma \ref{hoh} we may choose a subset $\mathcal P$ of the primes such that for $0 \leq a<b \leq 1$ then the number of primes in each interval
$[x+a x^\theta,x+b x^\theta]$  is asymptotic to $c(b-a) x^\theta/\log x$. Now let $a_p= y_{m}$ if $\, \abs{p-x- \frac{J+1-m}{J+1} x^\theta}<\varepsilon$ and $p \in \mathcal P$ for $m=1,\ldots,J$ and let $a_p=0$ otherwise. %
For a sufficiently large $x$ we get that
\begin{gather}
  \sum_{\substack{x\leq p<x+x^\theta \\ p \text{ prime}}} a_p \left(\frac{x+x^\theta-p}{x^\theta} \right)^{j-1} \log p = 
\frac {2 c\varepsilon x^\theta}{(J+1)^{j}} \sum_{m=1}^{J}  m^{j-1} (1+ \varepsilon_m) y_m, \qquad (j=1,\ldots,J)
\end{gather}
where $\abs{\varepsilon_m} \leq 2\varepsilon$. We thus consider the linear system of equations
\begin{gather}
  \sum_{m=1}^{J} m^{j-1} (1+\varepsilon_m) y_m=\frac{\xi_j (J+1)^{j}}{2 c \varepsilon x^\theta}, \qquad (j=1,\ldots,J)
\end{gather}
When $\varepsilon_m=0$ the coefficient matrix is a Vandermonde matrix with non-zero determinant. Since the determinant of a matrix is a continuous function of its entries we may choose $\varepsilon$ sufficiently small so that when $|\varepsilon_m| \leq 2\varepsilon$ then  the absolute value of the determin                                      ant of the coefficient matrix is greater than $\delta$ for some $\delta>0$. We use the Cramer rule to solve the system of equations. Since $|\xi_j| \leq w(x) x^\theta$, the determinant in the numerator is bounded by $C_J  w(x)/(2c \varepsilon)$, when $|\varepsilon_m| \leq 1$ for $m=1,\ldots,J$ and where $C_J$ is an absolute positive constant only depending on $J$. We thus get that
\begin{gather}
 \max_{1 \leq m \leq J} \abs{y_m} \leq \frac{C_J w(x)}{2 c \varepsilon \delta},
\end{gather}
 Since $w(x)=o(1)$ the right hand side in the above inequality tend to zero as $x \to \infty$ and this gives us that for a sufficiently large $x \geq X_0(J)$ we have solutions to the system of equations with $|y_m| \leq 1$ and thus also  $|a_p|\leq 1$.
\end{proof}
By combining Lemma \ref{lem3} and Lemma \ref{lem4} and multiplying the inequality by $x^{(j-1)\theta}$ we arrive at the following result
\begin{lem} \label{lem5}
  Let $c>0$ and  $\theta<1$ be admissible in Lemma \ref{hoh} and let $w(x)=o(1)$. Then for any $J \in \Z^+$ there exist some $X_0(J)$ such that if $x \geq X_0(J)$ and $|\xi_j| \leq w(x)x^{j\theta}$ for $1 \leq j \leq J$, then there exist $|\chi(p)|=1$ for the primes $x \leq p<x+x^{\theta}$ such that   
\begin{gather}
  \abs{\sum_{\substack{x \leq p<x+x^\theta \\ p \text{ prime}}} \chi(p) \left(x+x^\theta-p \right)^{j-1} \log p - \xi_j} \leq J x^{(j-1)\theta}\log(x+x^\theta), \qquad (1 \leq j \leq J)
\end{gather}
\end{lem}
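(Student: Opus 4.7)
The plan is to combine Lemma \ref{lem3} and Lemma \ref{lem4} by a simple rescaling trick, so that the real content (existence of coefficients of absolute value at most one realizing a given set of linear constraints on the primes in $[x,x+x^\theta]$) is handed off to Lemma \ref{lem4}, while the upgrade from $|a_p|\le 1$ to $|\chi(p)|=1$ is handed off to Lemma \ref{lem3}.

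First I would reduce to the normalized form used in Lemma \ref{lem4}. Define $\tilde\xi_j=\xi_j x^{-(j-1)\theta}$ for $1\le j\le J$. The hypothesis $|\xi_j|\le w(x)x^{j\theta}$ becomes $|\tilde\xi_j|\le w(x)x^{\theta}$, so by Lemma \ref{lem4} (applied to $\tilde\xi_j$ in place of $\xi_j$) there exist complex numbers $a_p$ with $|a_p|\le 1$ for the primes $p$ in $[x,x+x^\theta]$ such that
$$
\sum_{\substack{x\le p<x+x^\theta\\ p\text{ prime}}} a_p\left(\frac{x+x^\theta-p}{x^\theta}\right)^{j-1}\log p=\tilde\xi_j,\qquad(1\le j\le J),
$$
provided $x\ge X_0(J)$.

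Next I would apply Lemma \ref{lem3} in dimension $d=J$. For each prime $p\in[x,x+x^\theta]$ introduce the vector $v_p\in\C^J$ whose $j$-th coordinate is $\log p\cdot\bigl((x+x^\theta-p)/x^\theta\bigr)^{j-1}$. Since $0\le (x+x^\theta-p)/x^\theta\le 1$ and $\log p\le\log(x+x^\theta)$, we have $\|v_p\|_\infty\le\log(x+x^\theta)$ for every such $p$. Lemma \ref{lem3} then furnishes unimodular complex numbers $b_p$ with
$$
\Bigl\|\sum_p b_p v_p-\sum_p a_p v_p\Bigr\|_\infty\le J\log(x+x^\theta).
$$
Reading this componentwise yields, for each $1\le j\le J$,
$$
\Bigl|\sum_p b_p\left(\frac{x+x^\theta-p}{x^\theta}\right)^{j-1}\log p-\tilde\xi_j\Bigr|\le J\log(x+x^\theta).
$$

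Finally I would set $\chi(p)=b_p$ for the primes in $[x,x+x^\theta]$ and multiply the displayed inequality by $x^{(j-1)\theta}$, which restores the unnormalized polynomial $(x+x^\theta-p)^{j-1}$ on the left and converts $\tilde\xi_j$ back to $\xi_j$ on the right, giving exactly the claimed bound. There is essentially no obstacle here: the one thing to keep an eye on is that the two lemmas quantify over the same set of primes in $[x,x+x^\theta]$ and in the same norm, which is the case since Lemma \ref{lem3} is norm-agnostic and was applied in the $\ell^\infty$ norm on $\C^J$.
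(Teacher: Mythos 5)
Your proof is correct and is exactly the argument the paper intends: rescale by $x^{-(j-1)\theta}$ to match Lemma~\ref{lem4}'s hypothesis, obtain the $a_p$ with $|a_p|\le 1$, then use Lemma~\ref{lem3} in $\C^J$ with the vectors $v_p$ to replace $a_p$ by unimodular $b_p$, and finally multiply by $x^{(j-1)\theta}$. The paper gives only the one-line summary "By combining Lemma~\ref{lem3} and Lemma~\ref{lem4} and multiplying the inequality by $x^{(j-1)\theta}$", and your write-up fills in precisely those details.
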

\begin{lem} \label{lem6}
 Let $0<\theta<1$ be admissible in Lemma \ref{hoh}, let $x_1=1$ and $x_{k+1}=x_k+x_k^\theta$. Suppose  that for some $J \geq 1$  and for some sufficiently large $k=k_0$ that  the completely multiplicative unimodular function $\chi(p)$ is defined on the primes $p < x_{k_0}$  such that with $r_j(x)$ defined by \eqref{r1def} and \eqref{rkdef} then we have with $k=k_0$ that
   \begin{gather} \abs{r_j(x_k)} \leq \frac{(J+1)}{(j-1)!} x_{k}^{(j-1) \theta}\log x_{k} \qquad (1 \leq j \leq J). \label{iii}
   \end{gather}
   Then there exist some arbitrarily large $k_1>k_0$ and definition of the unimodular completely multiplicative function $\chi(p)$ on the primes $x_{k_0} \leq p < x_{k_1}$ such that \eqref{iii} holds for $k_0  \leq k<k_1$ and such that
  \begin{gather} \label{iv}    \abs{r_j(x_{k_1})} \leq \frac{(J+2)}{(j-1)!} x_{k_1}^{(j-1) \theta}\log x_{k_1} \qquad (1 \leq j \leq J+1)
   \end{gather}
\end{lem}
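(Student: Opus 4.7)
The plan is to build $\chi$ block by block on the intervals $[x_k,x_{k+1})$, $k_0 \le k < k_1$, by applying Lemma~\ref{lem5} with $J+1$ conditions at each step, and choosing the targets so that $r_1,\ldots,r_J$ remain controlled at the level of \eqref{iii} while $|r_{J+1}|$ is progressively driven down. The first ingredient is a clean transition rule from $x_k$ to $x_{k+1}$. Writing the prime-power part of $r_j$ as $S_j(x)=\sum_{n\le x}\chi(n)\Lambda(n)(x-n)^{j-1}/(j-1)!$, splitting $S_j(x_{k+1})$ into $n\le x_k$ and $x_k<n\le x_{k+1}$, expanding $(x_{k+1}-n)^{j-1}$ via the binomial theorem on the first piece, and treating the smooth part coming from $q$ by Taylor's theorem with integral remainder, I obtain
\begin{gather*}
  r_j(x_{k+1}) \;=\; \sum_{i=0}^{j-1} \frac{x_k^{i\theta}}{i!}\, r_{j-i}(x_k) \;+\; \frac{T_j}{(j-1)!} \;+\; N_j,
\end{gather*}
where $T_j=\sum_{x_k\le p<x_{k+1}} \chi(p)(x_{k+1}-p)^{j-1}\log p$ is the fresh prime contribution on the block and $N_j$ gathers the negligible terms (Taylor remainder for $q$, and prime powers of exponent $\ge 2$), satisfying $|N_j|=o(x_k^{j\theta})$.

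At each step I invoke Lemma~\ref{lem5} with $J+1$ conditions, choosing targets $\xi_j=T_j^{\mathrm{req}}$ that force $r_j(x_{k+1})=0$ for $1\le j\le J$ and $r_{J+1}(x_{k+1})=R_{k+1}^{\mathrm{tgt}}$, where $R_k:=r_{J+1}(x_k)$ and $R_{k+1}^{\mathrm{tgt}}$ is aimed at shrinking $|R_k|$. Admissibility $|\xi_j|\le w(x_k)x_k^{j\theta}$ for $j\le J$ follows directly from \eqref{iii} and the identity, which give $|T_j^{\mathrm{req}}|\le (J+1)2^{j-1}x_k^{(j-1)\theta}\log x_k/(j-1)!+o(x_k^{j\theta})=o(x_k^{j\theta})$. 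For $j=J+1$, letting $E_k:=\sum_{i=1}^{J}\frac{x_k^{i\theta}}{i!}r_{J+1-i}(x_k)=O_J(x_k^{J\theta}\log x_k)$ by \eqref{iii} and $\eta_k:=w(x_k)x_k^{(J+1)\theta}/(2J!)$, the constraint reduces to $|R_{k+1}^{\mathrm{tgt}}-R_k|\le \eta_k$ for large enough $x_k$. I pick $w(x)=C_J\log x/x^{\theta/2}$ with $C_J$ sufficiently large: this is $o(1)$ yet makes $\eta_k$ exceed the Lemma~\ref{lem5} matching error of size $(J+1)x_k^{J\theta}\log x_{k+1}/J!$ by a factor of order $x_k^{\theta/2}$. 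Setting $R_{k+1}^{\mathrm{tgt}}=R_k-\min(|R_k|,\eta_k)\cdot R_k/|R_k|$ then yields $|R_{k+1}|\le|R_k|-\eta_k/2$ whenever $|R_k|>\eta_k$.

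Because $x_{k+1}-x_k=x_k^\theta$, an integral comparison shows $\sum_{k\ge k_0}\eta_k$ diverges; starting from the trivial a priori bound $|R_{k_0}|=O(x_{k_0}^{J+1})$, finitely many reduction steps therefore bring us to some $k$ with $|R_k|\le\eta_k$. At that step I set $R_{k+1}^{\mathrm{tgt}}=0$, and the Lemma~\ref{lem5} error gives
\begin{gather*}
  |r_{J+1}(x_{k+1})|\;\le\;\frac{J+1}{J!}\,x_{k+1}^{J\theta}\log x_{k+1}\;\le\;\frac{J+2}{J!}\,x_{k+1}^{J\theta}\log x_{k+1},
\end{gather*}
which is \eqref{iv} at $k_1:=k+1$; continuing with zero target for all $j\le J+1$ extends the construction to arbitrarily large $k_1$. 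Throughout, the same Lemma~\ref{lem5} error (now with zero target) gives $|r_j(x_{k+1})|\le \frac{J+1}{(j-1)!}x_{k+1}^{(j-1)\theta}\log x_{k+1}$ for $j\le J$, so \eqref{iii} is maintained on $k_0\le k<k_1$. The main obstacle is the tension in choosing $w$: it must be large enough that $\sum\eta_k$ can absorb a potentially huge $R_{k_0}$ and that $\eta_k$ dominates the Lemma~\ref{lem5} matching error at $j=J+1$, yet still $o(1)$ so that admissibility holds simultaneously for all $J+1$ targets; this balancing act, together with the need for the asymptotic estimates for $N_j$ and for Lemma~\ref{lem5}'s $X_0(J+1)$ to take effect, is what forces $k_0$ to be sufficiently large in the hypothesis.
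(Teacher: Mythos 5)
Your construction follows the paper's proof in all essentials: expand $r_j$ over each block $[x_k,x_{k+1})$ via a Taylor-type identity isolating the fresh prime sum $T_j$, apply Lemma~\ref{lem5} with $J+1$ conditions at each step, keep $r_1,\dots,r_J$ small so \eqref{iii} persists, and use the $(J+1)$st target to drive $|r_{J+1}|$ down step by step until \eqref{iv} holds. The paper's decrement rule subtracts a fixed $4J\,x_k^{J\theta}\log x_k$ per step rather than your $\eta_k=w(x_k)x_k^{(J+1)\theta}/(2J!)$, but that is cosmetic.

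There is, however, one genuine gap: the fixed choice $w(x)=C_J\log x/x^{\theta/2}$ need not satisfy the admissibility requirement $|\xi_j|\le w(x)x^{j\theta}$ of Lemma~\ref{lem5}. The dominant piece of your noise $N_j$ is $\frac{1}{(j-1)!}\int_{x_k}^{x_{k+1}}(x_{k+1}-t)^{j-1}q(t)\,dt$, and since $q$ is only assumed $o(1)$ (Lemma~\ref{lem1} gives no decay rate), this term is $o(x_k^{j\theta})$ and nothing better. For $q$ decaying slower than $x^{-\theta/2}\log x$, say $q(x)=1/\log\log x$, you get $(j-1)!|N_j|\gg x_k^{j\theta}/\log\log x_k\gg w(x_k)x_k^{j\theta}$, so the target is inadmissible. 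Making $k_0$ large does not help because no fixed threshold converts $o(1)$ into $O(x^{-\theta/2}\log x)$. The fix is easy and in the spirit of the paper, which keeps $w$ generic: let $w$ be adapted to $q$, e.g. $w(x)=\max_{1\le j\le J+1}(j-1)!\,|N_j(x)|/x^{j\theta}+\log x/x^{\theta/2}$, still $o(1)$, still making $\eta_k$ dwarf the Lemma~\ref{lem5} matching error by a factor $\gg x_k^{\theta/2}$, and now absorbing all $J+1$ targets. With that repair your argument goes through. (A minor bookkeeping slip: in your bound on $T_j^{\mathrm{req}}$ the factor $1/(j-1)!$ should not appear, since $T_j$ itself is the raw prime sum and the $(j-1)!$ cancels when you multiply the Taylor-coefficient bound by $(j-1)!$; this has no effect on the $o(x_k^{j\theta})$ conclusion.)
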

\begin{proof}
  By writing $r_j(x)$ as a Taylor polynomial with integral remainder term we get
\begin{gather} \label{Taylor1}  r_j(x)  = P_j(x)+R_j(x), \qquad (x_k \leq x  \leq x_{k+1}) \\ \intertext{where} \label{Taylor2} P_j(x)=\sum_{m=0}^{j-1} r_{j-m}(x_k) \frac{(x-x_k)^m}{m!},  \qquad \text{and} \qquad R_j(x)=\frac 1 {(j-1)!} \int_{x_k}^x (x-t)^{j-1}  dr_1(t). \qquad
\end{gather}
We may write 
\begin{gather}  \label{rem1}
 R_j(x)=\frac 1 {(j-1)!} \sum_{x_k \leq  p<x }  \chi(p) (x-p)^{j-1} \log p+E_j(x), \\ \intertext{where} \label{rem2} E_j(x)=\frac 1 {(j-1)!} \int_{x_k}^{x} (x-t)^{j-1} q(t) dt +\frac 1 {(j-1)!} \sum_{\substack{x_k \leq p^v<x \\ p \text{ prime}, v \geq 2} }  \chi(p^v)(x-p^v)^{j-1} \log p  \qquad
\end{gather}
We  now construct the completely multiplicative function recursively for $x_k \leq p<x_{k+1}$ starting with $k=k_0$ such that \eqref{iii} hold at each step. By the fact that $q(x) =o(1)$ and the fact that the prime powers $p^v$ with $v \geq 2$ are sparse in  short intervals\footnote{less than the number of powers of integers $n^v$ for $v \geq 2$ is sufficient} it follows    that
\begin{gather*}
 \abs{E_j(x_{k+1})}= o(x^{j \theta}), \qquad (1\leq j \leq J+1) \label{o1}
\end{gather*}
By our assumption \eqref{iii} it furthermore follows by \eqref{Taylor2} and the triangle inequality that 
\begin{gather*}
 \abs{P_j(x_{k+1})}\ll_J x_k^{(j-1)\theta} \log x_k \qquad (1 \leq j \leq J)  \label{o2}
\end{gather*}
We arrive  at
\begin{gather} \label{jir}
  r_j(x_{k+1})=\frac 1 {(j-1)!} \sum_{x_k \leq  p<x_{k+1} }  \chi(p) (x_{k+1}-p)^{j-1} \log p+ \xi_j,  \qquad (1\leq j \leq J)
\end{gather}
where  $|\xi_j|=o(x^{j\theta}), $
For $j=J+1$ we can prove something similar, but we do not have an estimate for $r_{J+1}(x_k)$. However proceeding
in the same way we get that
\begin{gather*}
 r_{J+1}(x_{k+1})-r_{J+1}(x_k) =\frac 1 {J!} \sum_{x_k \leq  p<x_{k+1} } 
 \chi(p) (x_{k+1}-p)^{J} \log p + \xi
\end{gather*}
where $|\xi|=o(x^{(J+1)\theta})$. We now define 
\begin{gather*}
  \xi_{J+1}:= \xi+\begin{cases}  -r_{J+1}(x_{k}) & |r_{J+1}(x_{k})| \leq  8J x_k^{J\theta} \log x_k   \\
 -4Jx^{J \theta} \log x_k \frac{r_{J+1}(x_k)}{\abs{r_{J+1}(x_k)}} & \text{otherwise}\end{cases}
\end{gather*}
We now use Lemma \ref{lem5} to define $\chi(p)$ for $x_k \leq p<x_{k+1}$
such that
\begin{gather*}
 \abs{\frac 1 {(j-1)!} \sum_{x_k \leq  p<x_{k+1} }  \chi(p) (x_{k+1}-p)^{j-1} \log p+ \xi_j} \leq\frac{ (J+1)}{(j-1)!} x_k^{(j-1)  \theta} \log x_k,  \qquad (1\leq j \leq J+1).
\end{gather*}
It is clear by \eqref{jir} that \eqref{iii} also holds for $x_{k+1}$. At each step in the recursion process $|r_{J+1}(x_{k})|$ will be smaller until it is small enough for \eqref{iv} to hold. In particular it follows that \eqref{iv} holds for any $k_1 \geq k_0+|r_{J+1}(x_{k_0})/(2J \log x_{k_0})|$.
\end{proof}

 \begin{lem} \label{lem7}
  Let $0<\theta<1$ be admissible in Lemma \ref{hoh}. Then there exist some completely multiplicative unimodular function $\chi$ such that with $r_j(x)$ defined by \eqref{r1def} and \eqref{rkdef} where $q(x)=o(1)$, then
 \begin{gather} \label{ajaj}
  \abs{r_j(x)} \ll_j x^{j \theta}.
\end{gather}
\end{lem}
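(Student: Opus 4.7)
My plan is to iterate Lemma \ref{lem6} to define $\chi(p)$ prime by prime along the sample points $x_k$, and then to extend the resulting sample-point bounds to every real $x\geq 1$ by reusing the Taylor expansion already introduced in the proof of Lemma \ref{lem6}.

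To construct $\chi$, I would set up an infinite tower of applications of Lemma \ref{lem6}. The starting point is $k_0=1$: since $\chi$ only has to be defined on the empty set of primes below $x_1=1$ and $r_j(1)=0$ for every $j$, the hypothesis of Lemma \ref{lem6} is trivially satisfied at $k_0=1$ for any admissible $J$. The ``sufficiently large $k_0$'' technicality can be absorbed by taking the first output $k_1$ large enough, or by prescribing $\chi$ arbitrarily on the finitely many primes below a fixed threshold and running a few initial rounds of Lemma \ref{lem5} by hand. Assume inductively that after stage $n-1$ the function $\chi$ has been defined on all primes $p<x_{k_{n-1}}$ in such a way that
$\abs{r_j(x_{k_{n-1}})}\leq \frac{n+1}{(j-1)!}\,x_{k_{n-1}}^{(j-1)\theta}\log x_{k_{n-1}}$ for $1\leq j\leq n$. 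Applying Lemma \ref{lem6} with $J=n$ then furnishes an arbitrarily large $k_n>k_{n-1}$ and an extension of $\chi$ to the primes in $[x_{k_{n-1}},x_{k_n})$ for which the analogous bound with constant $n+2$ holds at $x_{k_n}$ over the enlarged range $1\leq j\leq n+1$, while the inductive bound persists at every intermediate sample point. Taking the union over $n$ defines $\chi$ on all primes. If each $k_n$ is chosen so that $x_{k_n}$ grows fast enough to ensure $(n+2)\log x_{k_n}\leq x_{k_n}^\theta$, then the sample-point bound collapses to $\abs{r_j(x_k)}\ll_j x_k^{j\theta}$ for every fixed $j$ and every sufficiently large $k$.

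It then remains to transfer the estimate from the sample points to an arbitrary $x\in[x_k,x_{k+1}]$. For this I would reuse the Taylor decomposition $r_j(x)=P_j(x)+R_j(x)$ of \eqref{Taylor1} and \eqref{Taylor2}, with $P_j(x)=\sum_{m=0}^{j-1} r_{j-m}(x_k)(x-x_k)^m/m!$ and $R_j(x)=\frac{1}{(j-1)!}\int_{x_k}^{x}(x-t)^{j-1}\,dr_1(t)$. Since $x-x_k\leq x_k^\theta$, plugging in the sample-point bounds immediately yields $\abs{P_j(x)}\ll_j x_k^{(j-1)\theta}\log x_k$. For the remainder, I would estimate the total variation of $r_1$ on $[x_k,x_{k+1}]$ as $\int_{x_k}^{x_{k+1}}|q(t)|\,dt=o(x_k^\theta)$ (since $q=o(1)$) plus $\sum_{x_k\leq p^v<x_{k+1}}\log p=O(x_k^\theta)$, using Lemma \ref{hoh} for the prime contribution and the sparseness of higher prime powers. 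This gives $\abs{R_j(x)}\ll_j(x_k^\theta)^{j-1}\cdot x_k^\theta=x_k^{j\theta}$, and summing the two contributions produces the desired estimate $\abs{r_j(x)}\ll_j x^{j\theta}$.

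The step I expect to require the most care is the bookkeeping of the constants across the tower of applications: Lemma \ref{lem6} loses one unit of slack in its constant each time $J$ is incremented, so the sequence $k_n$ has to be chosen growing fast enough that the drifting constant $n+2$ is dominated by $x_{k_n}^\theta/\log x_{k_n}$. Once this growth rate is arranged and the base case is handled, the Taylor-expansion argument and the short-interval prime estimate combine essentially automatically to yield \eqref{ajaj}.
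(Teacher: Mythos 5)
Your overall architecture matches the paper's: iterate Lemma~\ref{lem6} with $J$ incremented at each stage, choose $k_{J+1}$ growing fast enough that the drifting constant $J+2$ is absorbed by the factor $x^{\theta}/\log x$, and then fill in $x\in[x_k,x_{k+1}]$ via the Taylor decomposition \eqref{Taylor1}--\eqref{Taylor2}. That part of the proposal is sound (a small slip: the upper bound $\pi(x+x^\theta)-\pi(x)\ll x^\theta/\log x$ needed for the remainder $R_j$ comes from a sieve bound, not from Lemma~\ref{hoh}, which is a lower bound; the paper notes this in a footnote).

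The genuine gap is the initialisation. Neither of your two suggestions produces the base case $\abs{r_1(x_{k_0})}\leq 2\log x_{k_0}$ that Lemma~\ref{lem6} requires with $J=1$. Taking $k_0=1$ is not admissible because Lemma~\ref{lem6} (via Lemma~\ref{lem5}) is an asymptotic statement valid only for $x_k\geq X_0(J)$; and the error terms $E_j(x_{k+1})=o(x^{j\theta})$ in its proof also require $x$ large. Your fallback --- ``prescribe $\chi$ arbitrarily below a threshold and run a few rounds of Lemma~\ref{lem5} by hand'' --- also fails: an arbitrary unimodular $\chi$ can make $\sum_{p<x_{k_0}}\chi(p)\log p$ of order $x_{k_0}$, while each application of Lemma~\ref{lem5} over a window $[x,x+x^\theta]$ can only shift $r_1$ by $o(x^\theta)$, so ``a few rounds'' cannot bring $r_1$ down from size $\Theta(x_{k_0})$ to $O(\log x_{k_0})$; you would need on the order of $x_{k_0}^{1-\theta}$ windows, and during that long drift $r_2,r_3,\dots$ accumulate well beyond what \eqref{iii} allows. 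The paper's proof therefore does something different and specifically tailored: it sets $\chi(p_m)=(-1)^m$ on the primes $p_m<x_{k_0}$ (so that, by Abel summation, $\sum_{p<x_{k_0}}\chi(p)\log p=O(\log x_{k_0})$ rather than $O(x_{k_0})$, giving $\abs{A}=o(x_{k_1})$ in \eqref{eq2a}), and then uses the prime number theorem over the \emph{long} interval $[x_{k_0},x_{k_1})$ with $3/2\leq x_{k_1}/x_{k_0}\leq 2$ to choose $\chi$ on those $\gg x_{k_1}/\log x_{k_1}$ primes and drive $\abs{r_1(x_{k_1})}\leq\log x_{k_1}$ in a single step, as in \eqref{eq2b}. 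Without some such device that first makes the seed $A$ small and then cancels it over a long interval, the recursion cannot get started, so this step needs to be supplied explicitly.
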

\begin{proof}
 As in Lemma \ref{lem6} we will let $x_1=1$, $x_{k+1}=x_k+x_k^\theta$. Choose some sufficiently large $k_0<k_1$ such that $  3/2 \leq x_{k_1}/{x_{k_0}} \leq 2$. Let $p_1<\dots<p_M$ be an enumeration of the primes less than $x_{k_0}$ and let $\chi(p_m)=(-1)^m$ for $1 \leq m \leq M$. Now let
\begin{gather} \label{eq2a} A=\int_1^{x_{k_1}} q(x) dx +\sum_{1 \leq n < x_{k_0}} \chi(n)\Lambda(n) +
\sum_{\substack{x_{k_0} \leq p^v <x_{k_1}\\ p \text{ prime}, v \geq 2}} \chi(p^v)\log p \end{gather}
By the definition of $\chi(p)$ for $p \leq x_{k_1}$, the fact that prime powers of higher order than $1$ are sufficiently sparse and the fact that $q(x)=o(1)$ it follows that $\abs{A} =o(x_{k_1})$ and since by the prime number theorem there are $\gg x_{k_1}/\log x_{k_1}$ primes in the interval $[x_{k_0},x_{k_1})$ we may define $|\chi(p)|=1$ in this interval such that 
\begin{gather} \label{eq2b}
  \Big | \sum_{\substack{x_{k_0} \leq p <x_{k_1} \\ p \text{ prime}} } \chi(p) \log p+A \, \Big | \leq \log x_{k_1}.
\end{gather}
Thus by \eqref{r1def}, \eqref{eq2a} and \eqref{eq2b} we get
\begin{gather*}
  |r_1(x_{k_1})| \leq \log x_{k_1}.
 \end{gather*}
This allows us for $J \geq 1$ to use Lemma \ref{lem6} recursively to define $k_{J+1} >  \max(k_{J},2^J)$ and the completely multiplicative unimodular function $\chi(p)$ on the primes $x_{k_J} \leq p <x_{k_{J+1}}$ such that
\begin{gather} 
     \abs{r_j(x_k)} \leq
      \frac{(J+1)}{(j-1)!} x_{k}^{(j-1) \theta} \log x_{k} \qquad (1 \leq j \leq J, \, k_J \leq k \leq k_{J+1}).      \label{er1}
   \end{gather}
Thus \eqref{ajaj} holds for $x=x_k$. It remains to prove \eqref{ajaj} for $x_k \leq x \leq x_{k+1}$.  By estimating $r_j(x)$ in the interval $x_k 
<x<x_{k+1}$  by the Taylor polynomial with remainder term \eqref{Taylor1},\eqref{Taylor2}, where the polynomial term is estimated by \eqref{er1} and remainder term $R_j(x)$ is estimated by \eqref{rem1}, \eqref{rem2}
trivially\footnote{By sieve methods it is well known that  $\pi(x+x^\alpha)-\pi(x)  \ll x^\alpha/\log x$ for any $\alpha>0$.} by the triangle inequality and absolute values we find that the estimate \eqref{ajaj} hold also in the interval $x_k<x<x_{k+1}$.
\end{proof}

\bibliographystyle{plain}

\begin{thebibliography}{10}

\bibitem{Ark1}
N.~U. Arakelyan.
\newblock Uniform and tangential approximations by analytic functions.
\newblock {\em Izv. Akad. Nauk Arm. SSR, Mat.}, 3:273--286, 1968.
\newblock English translation : {\em Transl., Ser. 2, Am. Math. Soc.},
  122:85--97, 1984.

\bibitem{Bagchi}
Bhaskar Bagchi.
\newblock A joint universality theorem for {D}irichlet {$L$}-functions.
\newblock {\em Math. Z.}, 181(3):319--334, 1982.

\bibitem{BaHaPi}
R.~C. Baker, G.~Harman, and J.~Pintz.
\newblock The difference between consecutive primes. {II}.
\newblock {\em Proc. London Math. Soc. (3)}, 83(3):532--562, 2001.

\bibitem{BhoMat}
Gautami Bhowmik and Kohji Matsumoto.
\newblock Analytic continuation of random {D}irichlet series.
\newblock {\em Proc. Steklov Inst. Math.}, 282:S67--S72, 2013.

\bibitem{BhoS}
Gautami Bhowmik and Jan-Christoph Schlage-Puchta.
\newblock The maximal domain of meromorphic continuation of a {D}irichlet
  series.
\newblock {\em Analysis (Berlin)}, 36(3):205--210, 2016.

\bibitem{Bochkov}
I.~Bochkov.
\newblock Helson zeta functions for characters with finitely many values.
\newblock {\em Bull. Lond. Math. Soc.}, 55(5):2233--2241, 2023.

\bibitem{BoRo}
I.~Bochkov and R.~Romanov.
\newblock On zeroes and poles of {H}elson zeta functions.
\newblock {\em J. Funct. Anal.}, 282(8):Paper No. 109398, 8, 2022.

\bibitem{fedja}
fedja (https://mathoverflow.net/users/1131/fedja).
\newblock Analytic continuation of dirichlet series with completely
  multiplicative coefficients of modulus 1.
\newblock MathOverflow.
\newblock URL:https://mathoverflow.net/q/20888 (version: 2010-04-10).

\bibitem{HeBr}
D.~R. Heath-Brown.
\newblock The number of primes in a short interval.
\newblock {\em J. Reine Angew. Math.}, 389:22--63, 1988.

\bibitem{Helson}
Henry Helson.
\newblock Compact groups and {D}irichlet series.
\newblock {\em Ark. Mat.}, 8:139--143, 1969.

\bibitem{Hoheisel}
G.~Hoheisel.
\newblock Primzahlprobleme in der {Analysis}.
\newblock {\em Sitzungsber. Preu{{\ss}}. Akad. Wiss., Phys.-Math. Kl.},
  1930:580--588, 1930.

\bibitem{MittagLeffler2}
G{\"o}sta Mittag-Leffler.
\newblock {Sur la représentation analytique des fonctions monogènes
  uniformes: D’une variable indépendante}.
\newblock {\em Acta Mathematica}, 4:1 -- 79, 1884.

\bibitem{Que}
Herv\'e Queff\'elec.
\newblock Propri\'et\'es presque s\^ures et quasi-s\^ures des s\'eries de
  {D}irichlet et des produits d'{E}uler.
\newblock {\em Canadian J. Math.}, 32(3):531--558, 1980.

\bibitem{SaWe}
Eero Saksman and Christian Webb.
\newblock The {R}iemann zeta function and {G}aussian multiplicative chaos:
  statistics on the critical line.
\newblock {\em Ann. Probab.}, 48(6):2680--2754, 2020.

\bibitem{Seip}
Kristian Seip.
\newblock Universality and distribution of zeros and poles of some zeta
  functions.
\newblock {\em J. Anal. Math.}, 141(1):331--381, 2020.

\end{thebibliography}

\end{document}